\DeclareMathOperator{\dive}{div}
\newtheorem{theorem}{\textbf{\ \ \quad Theorem}}[section]
\newtheorem{remark}{\textbf{\ \ \quad Remark}}[section]
\newtheorem{col}{\textbf{\ \ \quad Corollary}}[section]
\renewcommand{\l}{\left}
\renewcommand{\r}{\right}
\newcommand{\lam}{\lambda}
\renewcommand{\phi}{\varphi}
\renewcommand{\a}{\alpha}
\newcommand{\delt}{\bigtriangleup t}
\newcommand{\be}{\begin{equation}}
\newcommand{\ee}{\end{equation}}
\newcommand\bes{\begin{eqnarray}}
\newcommand\ees{\end{eqnarray}}
\newcommand{\bess}{\begin{eqnarray*}}
\newcommand{\eess}{\end{eqnarray*}}
\newcommand{\dx}{{\rm d}x}
\newcommand{\dy}{{\rm d}y}
\newcommand{\befig}{\begin{figure}}
\newcommand{\enfig}{\end{figure}}
\newcommand{\bear}{\begin{eqnarray}}
\newcommand{\enar}{\end{eqnarray}}
\newcommand{\bearn}{\begin{eqnarray*}}
\newcommand{\enarn}{\end{eqnarray*}}
 \def\theequation{\arabic{section}.\arabic{equation}}
\title{ 
Dynamical behavior of a nonlocal Fokker-Planck equation for a stochastic system with tempered stable noise
}
\author{Li Lin $^1$, Jinqiao Duan $^2$,  Xiao Wang$^3$  and Yanjie Zhang$^4$  \\
\\
\ \\
{\small \it $^1$ Center for Mathematical Sciences, Huazhong University of Science and Technology }\\
  {\small \it Wuhan 430074, China  }\\
{\small \tt email: linli@hust.edu.cn}\\
{\small \it $^2$ Department of Applied Mathematics, Illinois Institute of Technology }\\
  {\small \it Chicago, IL 60616, USA }\\
   {\small \tt email:duan@iit.edu }\\
{\small \it $^3$ School of Mathematics and Statistics, Henan University }\\
  {\small \it Kaifeng 475001, China  }\\
{\small \tt email: xwang@vip.henu.edu.cn}\\
{\small \it $^4$ School of Mathematics, South China University of Technology }\\
  {\small \it Guangzhou 510000,  China }\\
  {\small \tt email:zhangyj18@scut.edu.cn}\\
}
\begin{document}
\maketitle

\begin{abstract}
 We characterize a stochastic dynamical system with tempered stable noise, by examining its probability density evolution.  This probability density function satisfies a nonlocal Fokker-Planck equation.  First, we prove a superposition principle that the probability measure-valued solution to this nonlocal Fokker-Planck equation is equivalent to the martingale solution composed with the inverse stochastic flow. This result together with a Schauder estimate leads to the existence and uniqueness of strong solution for the nonlocal Fokker-Planck equation. Second, we devise a convergent  finite difference method to simulate the probability density function by solving the nonlocal Fokker-Planck equation.
Finally, we apply our  aforementioned theoretical and numerical results to a nonlinear filtering system by simulating a nonlocal Zakai equation.

\medskip
\emph{Key words:} Nonlocal Fokker-Planck equation,  superposition principle,  Schauder estimate,  data assimilation,  Zakai equation.
\end{abstract}

\baselineskip=15pt

{\bfseries  Stochastic dynamic systems are widely used to describe various complex phenomena. The random fluctuations in complex phenomena are usually non-Gaussian. How to capture the uncertainly propagates and evolves for the nonlinear stochastic dynamical systems is an important issue. A popular method is to obtain the probability density function of the solution paths, which contains the complete statistical information.  In this paper, we prove a superposition principle that the probability measure-valued solution to this nonlocal Fokker-Planck equation is equivalent to the martingale solution. Meanwhile, we devise a convergent  finite difference method to simulate the probability density function by solving the nonlocal Fokker-Planck equation, which helps  us to detect the dynamic behavior of the stochastic dynamic system. Further, we apply our aforementioned theoretical
and numerical results to a nonlinear filtering system by simulating a nonlocal Zakai equation. The results established in this paper can be used to examine dynamical behaviors for financial markets, climate dynamics and physics.}

\section{Introduction}
\par
 L\'evy processes have been observed experimentally in fluid dynamics and polymers and have been used to describe subrecoil laser cooling, turbulent fluids, very stiff polymers, and the spectral random walk of a single molecule embedded in a solid \cite{Bli, Ts13, Dit, Ale, Taqqu}.  However, in physical systems, the variance of any stationary processes is finite. Then an unavoidable cutoff is always present. For example, in the case of a single molecule embedded in a solid, due to the minimal length between the molecule and the nearest two-level systems, a cutoff is present in the distribution of the jumps of the resonance frequency \cite{Ros, Kuchler}. Many scholars have introduced a smooth exponential regression towards zero. This makes it possible to derive an analytic expression for characteristic function and enables one to replace simulations by more straightforward calculations. This is the tempered stable processes what we mention in our paper. Unlike the $\a$-stable counterpart, a tempered stable L\'evy process   has mean, variance, and moments of all order \cite{Con}.


 The Fokker-Planck equation (FPE) is an important deterministic tool for quantifying stochastic differential equation. The solution of FPE describes the evolution of the transition probability density for  a stochastic system. The nonlocal FPE for systems with non-Gaussian noise have been derived (e.g.,\cite{ds,Zhang}).
 It is hardly possible to have analytical solution for the nonlocal FPE except the special drift term \cite{Xu3}. A few authors have considered numerical simulations of such nonlocal FPE. Cont and  Voltchkova \cite{Cont05}  presented a finite difference method for solving parabolic  integro-differential equations with possibly singular kernels, which can be used to price European and barrier options via stochastic models with tempered L\'evy processes.  Li and Deng \cite{Li} developed a high order difference scheme for a  tempered fractional diffusion equation on a bounded domain, together with stability and convergence analysis. Gao {\it et al.}~\cite{Gao} developed a fast finite difference scheme to simulate the nonlocal FPE on either a bounded or infinite domain. Xu {\it et al.}~\cite{Xu1, Xu2} developed the path integral method to solve one-dimensional space fractional Fokker-Planck-Kolmogorov equations.

Our study is divided into two  parts. In the first part, we study the superposition principle and global parabolic Schauder estimate for the nonlocal FPE under the natural condition $\alpha+\beta > 1$.   The operator $\mathcal{L}$ is not a stable-like operator, i.e., we can not find two positive constants $c_1$ and $c_2$ such that
\begin{equation}
c_1\leq \frac{1}{e^{\lambda |y|}}\leq c_2, ~~ \forall y \in \mathbb{R}^{d}.
\end{equation}
So we can not apply the recent works of Zhang  et al. \cite{Zhao} and Chen  et al. \cite{Chen}  who address the parabolic Dirichlet problem through using a probabilistic argument. A natural question  is whether  the global parabolic Schauder estimate
$\|p(x,t)\|_{L^\infty\l([0,T],C^{\a+\beta}_b\r)}\leq C\|g\|_{C^{1+\beta}_b}$ holds. We will answer this in Section $2$. Further,
  we construct a finite difference scheme to simulate the nonlocal Fokker-Planck equation. Under a specified condition, the semi-discrete scheme is shown to satisfy the discrete maximum principle and to be convergent. In the second part, we apply the aforementioned results to obtain the strong form of Zakai equation and conduct its numerical similation.
\medskip

%
\par
This paper is organized as follows. In section 2, we study the well-poseness for the nonlocal FPE associated with stochastic  differential equation driven by tempered stable noise. Further, we construct a convergent finite difference scheme to simulate the nonlocal FPE, and a numerical experiment is conducted to confirm the theoretical results. In section 3, we apply the aforementioned results to  nonlinear filtering problem. Some concluding remarks are made in Section 4.

\section{METHODOLOGY}
{\bf{A: The nonlocal FPE}}\\
In this paper, we consider the following  stochastic differential equation (SDE)
\begin{equation}
\label{SDE01}
\mathrm{d}X_t=f(X_t)\mathrm{d}t+ \mathrm{d}L_t,
\end{equation}
where $f$ is a $d$-dimensional Borel measurable  function, and $L_t$ is a $d$-dimensional tempered stable L\'evy process  with triplet $(0,0, \nu)$.  This triplet means that the process has  shift   zero,  diffusion   zero,  and L\'evy jump measure $\nu$. The   jump measure $\nu$ is obtained through multiplying the
$\a$-stable L\'evy measure by an exponentially decaying function. This transform is called exponential tilting of the L\'evy measure for $\a$-stable L\'evy measure (see \cite{Deng}), i.e.,
\begin{equation}
\label{LevyMesure2}
\nu(\dy) =  \frac{C_{d,\a} \dy }{e^{\lam |y|}|y|^{d+\a}}.
\end{equation}
Here $C_{d,\a}$ is a constant,  and $\lambda$ is the positive tempering parameter (see \cite{De20}). The parameter $\a \in(0,1)\cup (1,2)$ is called the stable index.


The corresponding FPE for the SDE \eqref{SDE01} is the following nonlocal parabolic equation, i.e.,
\begin{equation}
\label{parabo}
   \frac{\partial p}{\partial t}=-\nabla \cdot\left(f(x)p(x,t)\right)+\mathcal{L}p(x,t),
\end{equation}
where
\begin{equation}
\begin{aligned}
\mathcal{L}p(x,t)&=\int_{\mathbb{R}^d\setminus \{0\}}
   \left[p(x+y,t)-p(x,t)-1_{\{|y|<1\}}(y)y \cdot \nabla p(x,t)  \right]\nu(\mathrm{d}y) \\
   &={\bf{ p.v.}} \int_{\mathbb{R}^d\setminus \{0\}}
   \left[p(x+y,t)-p(x,t)\right]\nu(\mathrm{d}y).
   \end{aligned}
\end{equation}
In the following, we are  interested in studying the dynamical properties and numerical analysis for the equation \eqref{parabo} in the principle value sense.

{\bf{B: Examing the solution of nonlocal FPE }}
\par
 Let $\mu_t(dx)=p(x,t)dx$ be the marginal law of $X_t$. By It\^o's formula, $\mu_t$ solves the following
Fokker-Planck equation in the distributional sense, i.e.,
\begin{equation}
\label{0FPE}
  \partial_t \mu_t =-\nabla\cdot\left(f(x)\mu_t\right)+ \int_{\mathbb{R}^d\setminus \{0\}}
   \left[\mu_t(x+y)-\mu_t(x)-1_{\{|y|<1\}}(y)y \cdot\frac{ \nabla\mu_t }{\partial x}\right]\nu(\mathrm{d}y).
\end{equation}

The proof of the existence and uniqueness of the equation \eqref{FPE3} is based on two ingredients. The first ingredient will be to establish the superposition principle  for probability measure-valued solution to a nonlocal FPE.  The second ingredient will then be to establish the global Schauder estimate for the solution of (\ref{FPE3}).\\
{\bf{B1: Superposition Principle}}\\
Next we will establish the superposition principle for probability measure-valued solution to the nonlocal FPE.
Throughout this paper, we make the following assumptions.\\
{\bf Hypothesis  H.1.}
The drift coefficient $f$  is locally bounded, measurable and there exists a positive constant $C$ such that
\begin{equation}
\label{assum}
\sup_{x} \langle x, f(x) \rangle ^{+}\leq C(1+|x|^2),
\end{equation}
where $\langle x, f(x) \rangle ^{+}$ denote the non-negative part of $\langle x, f(x) \rangle.$\\
{\bf Hypothesis  H.2.}
The drift coefficient $f$ is locally $\beta$-H\"{o}lder continuous, $\beta\in(0, 1)$, i.e.,  there exists a positive constant $K_0$ such that
\begin{equation}
\label{f2}
|f(x)-f(y)|\leq K_0|x-y|^\beta, ~~x, y \in \mathbb{R}^d, ~s.t~~ |x-y|\leq 1.
\end{equation}
\begin{theorem}[Superposition principle]
Under Hypothesis  $\bf{H.1}$, for every weak solution $(\mu_t)_{t\geq 0}$ of nonlocal FPE \eqref{0FPE}, there is a martingale solution $\mathbb{P}\in \mathcal{M}^{\mu_0}_{0}(\widetilde{\mathcal{L}})$ such that
\begin{equation}
\mu_t=\mathbb{P}\circ X^{-1}_t, ~~\forall t \geq 0,
\end{equation}
\begin{proof}
Using the similar technique  as \cite [Theorem 1.5]{Long}, we only need to verify
$$\hbar^{\nu}(x):=\int_{|z|>1+|x|}\log \left(1+\frac{|z|}{1+|x|}\right)\nu(dz)<\infty.$$ In fact, $\forall \lambda >0$, $\alpha \in (0,1)\cup(1,2)$ and $\beta \in (0, \alpha \wedge 1)$, there exists a positive constant $C_{\alpha}$, such that
\begin{equation}
\begin{aligned}
\hbar^{\nu}(x) &= C_{\alpha} \int_{|z|>1+|x|} \log \left(1+\frac{|z|}{1+|x|}\right)\frac{1}{e^{\lam |z|}|z|^{1+\a}}dz\\
& \leq C_{\alpha}\int_{|z|>1+|x|}\log \left(1+\frac{|z|}{1+|x|}\right)\frac{1}{|z|^{1+\a}}dz\\
& \leq C_{\alpha} \int_{|z|>1+|x|} \left(\frac{|z|}{1+|x|}\right)^{\beta}\frac{1}{|z|^{1+\a}}dz \\
& \leq C_{\alpha} \int_{|z|>1}\frac{1}{|z|^{1+\alpha-\beta}}dz\\
& \leq C_{\alpha}.
\end{aligned}
\end{equation}
\end{proof}
\end{theorem}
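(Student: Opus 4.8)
The plan is to reduce the statement to a known superposition theorem for nonlocal Fokker--Planck equations with Lévy-type generators, in the spirit of the cited work of Long (and its antecedents such as Figalli, Trevisan, and Röckner--Xie--Zhang). The general machinery there says: given a weak (distributional) solution $(\mu_t)_{t\ge 0}$ of the Fokker--Planck equation associated with a generator $\widetilde{\mathcal L}$ consisting of a drift part $f\cdot\nabla$ plus a nonlocal part driven by a jump kernel $\nu$, one can lift $(\mu_t)$ to a martingale solution $\mathbb P\in\mathcal M^{\mu_0}_0(\widetilde{\mathcal L})$ with $\mu_t=\mathbb P\circ X_t^{-1}$, \emph{provided} the jump kernel satisfies a mild logarithmic integrability condition uniform in the base point, namely that
\[
\hbar^{\nu}(x):=\int_{|z|>1+|x|}\log\!\left(1+\frac{|z|}{1+|x|}\right)\nu(dz)<\infty
\]
for every $x$, together with the drift growth control, which here is exactly Hypothesis \textbf{H.1}. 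So the first step is to invoke that abstract result and isolate $\hbar^\nu(x)<\infty$ as the only thing left to check for our specific tempered stable kernel \eqref{LevyMesure2}.

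The second and main step is the verification of $\hbar^\nu(x)<\infty$, which is the short computation displayed in the excerpt and which I would carry out as follows. Write $\nu(dz)=C_{d,\alpha}\,e^{-\lambda|z|}|z|^{-(d+\alpha)}\,dz$. Since $e^{-\lambda|z|}\le 1$ we may drop the tempering factor and bound $\hbar^\nu(x)$ by the corresponding $\alpha$-stable integral; this is the one place the tempering is used, and it only helps (indeed for $\lambda>0$ the integral converges even without the further estimates, but keeping them makes the argument uniform and elementary). Then use the elementary inequality $\log(1+r)\le C_\beta\, r^{\beta}$ for $r\ge 0$ and any fixed $\beta\in(0,\alpha\wedge 1)$, applied with $r=|z|/(1+|x|)$, to replace the logarithm by a power. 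After this substitution the integrand is $|z|^{\beta-(d+\alpha)}(1+|x|)^{-\beta}$; changing variables $z=(1+|x|)w$ turns the domain $\{|z|>1+|x|\}$ into $\{|w|>1\}$ and cancels the $(1+|x|)$ factors exactly, leaving $\int_{|w|>1}|w|^{-(d+\alpha-\beta)}\,dw$. This converges precisely because $\alpha-\beta>0$, i.e.\ because $\beta<\alpha$; the value is a finite constant $C_\alpha$ independent of $x$, which gives $\sup_x\hbar^\nu(x)<\infty$ and completes the verification. (In the one-dimensional presentation of the excerpt $d=1$, so $d+\alpha=1+\alpha$ as written.)

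The third step is bookkeeping: confirm that Hypothesis \textbf{H.1} is exactly the hypothesis on the drift required by the abstract superposition principle, so that no additional regularity of $f$ (in particular not the Hölder condition \textbf{H.2}) is needed at this stage, and then quote the conclusion $\mu_t=\mathbb P\circ X_t^{-1}$ for all $t\ge0$ with $\mathbb P\in\mathcal M^{\mu_0}_0(\widetilde{\mathcal L})$. I expect the only genuine obstacle to be a technical one rather than a conceptual one: making sure that the generator $\widetilde{\mathcal L}$ used in the definition of the martingale problem is the one for which Long's theorem is stated (e.g.\ matching the choice of truncation function $1_{\{|y|<1\}}$ in \eqref{0FPE} with the one in the reference, and checking that the symmetry/boundedness conditions on $\nu$ implicit there are met by the tempered stable kernel). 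Once the hypotheses are aligned, the proof is precisely the displayed chain of inequalities plus a citation, which is why the argument is as short as it is.
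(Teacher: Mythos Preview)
Your proposal is correct and follows essentially the same approach as the paper: both reduce to the abstract superposition principle of R\"ockner--Xie--Zhang (cited as \cite{Long}) and then verify the single remaining hypothesis $\hbar^\nu(x)<\infty$ by the identical chain of estimates---drop the tempering factor $e^{-\lambda|z|}\le 1$, bound $\log(1+r)$ by $r^\beta$ with $\beta\in(0,\alpha\wedge 1)$, and rescale to land on $\int_{|w|>1}|w|^{-(1+\alpha-\beta)}\,dw<\infty$. Your additional remarks on dimension $d$, the explicit change of variables, and the hypothesis-matching bookkeeping are sound elaborations but do not depart from the paper's argument.
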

Next we will show the existence and uniqueness of strong solution for the nonlocal FPE \eqref{FPE3}.\\
{\bf{B2: Schauder Estimate}}\\
In this subsection, we are interested in the absolutely continuous solution of the nonlocal FPE \eqref{0FPE}, which satisfies the time-dependent integro-differential equation (also called nonlocal FPE), i.e.,

\begin{equation}
\label{FPE3}
  \frac{\partial p}{\partial t}=-\nabla \cdot\left(f(x)p(x,t)\right)+\mathcal{L}p(x,t), ~~p(x,0)=g(x).
\end{equation}

Superposition principle tells that the existence and uniqueness of the weak solution for equation \eqref{FPE3}.
To obtain  the existence and uniqueness of the strong solution, we need to establish the global Schauder estimate for the solution of \eqref{FPE3}.

\begin{theorem}(Schauder estimate.)\label{t11}
\label{schauder}
Under Hypothesis  $\bf{H.1}$ and Hypothesis  $\bf{H.2}$. Suppose that $\dive{f}\in C^{\beta}_b(\mathbb{R}^{d},\mathbb{R}), g\in C^{(1\vee \alpha) +\beta}_b(\mathbb{R}^{d},\mathbb{R}), $ and that there exists a positive constant $C$, such that  for the solution $p(x,t)$ of nonlocal FPE \eqref{FPE3} with $\alpha \in (0,1)\cup
(1,2)$, it holds that
\begin{equation}
\label{esti0}
\|p(x,t)\|_{L^\infty\l([0,T],C^{\a+\beta}_b\r)}\leq C\|g\|_{C^{(1\vee \alpha)+\beta}_b}.
\end{equation}
\end{theorem}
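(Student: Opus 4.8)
The plan is to treat \eqref{FPE3} as a lower-order perturbation of the fractional heat equation and to run an a~priori estimate in the H\"older scale. First I would put \eqref{FPE3} in non-divergence form, $\partial_t p=\mathcal Lp-f\cdot\nabla p-(\dive f)\,p$ with $p(\cdot,0)=g$, so that the drift appears as a first-order term and the reaction term $(\dive f)p$ carries the $C^{\beta}_b$ coefficient $\dive f$. The key point that circumvents the difficulty highlighted in the Introduction is that $\mathcal L$ \emph{is} stable-like at small scales: fixing $R\ge 1$, write $\mathcal L=\mathcal L_R+\mathcal B_R$, where $\mathcal L_R\phi(x)=\int_{0<|y|\le R}[\phi(x+y)-\phi(x)-1_{\{|y|<1\}}y\cdot\nabla\phi(x)]\,\nu(\de y)$ and $\mathcal B_R\phi(x)=\int_{|y|>R}[\phi(x+y)-\phi(x)]\,\nu(\de y)$. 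On $\{0<|y|\le R\}$ one has $C_{d,\alpha}e^{-\lambda R}|y|^{-d-\alpha}\le \nu(\de y)/\de y\le C_{d,\alpha}|y|^{-d-\alpha}$, so $\mathcal L_R$ is a translation-invariant, truncated $\alpha$-stable-like operator, whereas $\mathcal B_R$ is bounded: $\|\mathcal B_R\phi\|_{C^{\beta}_b}\le 2\,\nu(\{|y|>R\})\,\|\phi\|_{C^{\beta}_b}<\infty$. Equivalently, the characteristic exponent $\psi$ of $\mathcal L$ obeys $|\partial^k\psi(\xi)|\le C_k|\xi|^{\alpha-k}$ for $|\xi|\ge 1$, which is all the high-frequency information a Schauder estimate exploits.

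For $\alpha\in(1,2)$ I would then invoke the standard global parabolic Schauder estimate for the translation-invariant, truncated $\alpha$-stable-like operator $\mathcal L_R$ (it follows from the usual freezing argument with the heat-kernel bounds for $\mathcal L_R$, or from pseudo-differential Schauder theory for operators of order $\alpha$): with $h:=\mathcal B_Rp-f\cdot\nabla p-(\dive f)p$, the equation reads $\partial_t p=\mathcal L_Rp+h$, $p(\cdot,0)=g$, and $\|p\|_{L^\infty([0,T],C^{\alpha+\beta}_b)}\le C(\|g\|_{C^{\alpha+\beta}_b}+\|h\|_{L^\infty([0,T],C^{\beta}_b)})$. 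The right-hand side is estimated by the H\"older product rule, $\|\mathcal B_Rp\|_{C^{\beta}_b}\le C\|p\|_{C^{\beta}_b}$, $\|(\dive f)p\|_{C^{\beta}_b}\le C\|\dive f\|_{C^{\beta}_b}\|p\|_{C^{\beta}_b}$, and $\|f\cdot\nabla p\|_{C^{\beta}_b}\le C\|f\|_{C^{\beta}_b}\|p\|_{C^{1+\beta}_b}$ (here one uses, as a global estimate requires, that $f\in C^{\beta}_b$). Since $1+\beta<\alpha+\beta$ when $\alpha>1$, the interpolation inequality $\|p\|_{C^{1+\beta}_b}\le\e\|p\|_{C^{\alpha+\beta}_b}+C_\e\|p\|_{L^\infty}$ lets us absorb the $C^{\alpha+\beta}_b$-norm into the left-hand side for $\e$ small, while the maximum principle for \eqref{FPE3} (positivity of $p$ plus a Gronwall argument at an interior extremum, where $\nabla p=0$ and $\mathcal Lp\le0$) gives $\|p\|_{L^\infty}\le e^{C_0T}\|g\|_{L^\infty}\le e^{C_0T}\|g\|_{C^{(1\vee\alpha)+\beta}_b}$. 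Concatenating such estimates over short subintervals of $[0,T]$ (should the Schauder constant deteriorate with $T$) then yields \eqref{esti0} for $\alpha\in(1,2)$.

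The main obstacle is the regime $\alpha\in(0,1)$: now $f\cdot\nabla p$ is of strictly \emph{higher} order than $\mathcal L$, one cannot perturb off $\mathcal L$ alone, and the interpolation above breaks down because $C^{1+\beta}_b\not\hookrightarrow C^{\alpha+\beta}_b$. This is exactly the point at which the natural condition $\alpha+\beta>1$ must be used. Here I would instead construct the fundamental solution of the full operator $\partial_t-\mathcal L-f\cdot\nabla$ by Levi's parametrix method, taking as the frozen-coefficient parametrix the $\alpha$-stable heat density translated along the frozen drift, $\Gamma_0(t-s,\,x-y-(t-s)f(x_0))$, the tempering factor $e^{-\lambda|y|}$ and the tail $\mathcal B_R$ contributing only lower-order corrections to the kernel. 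The first error kernel involves $(f(x)-f(x_0))\cdot\nabla_x\Gamma_0(t-s,\cdot)$, whose singularity is controlled by $(t-s)^{(\beta-1)/\alpha}$ times a stable-type profile; the time integral $\int_0^t(t-s)^{(\beta-1)/\alpha}\,\de s$ is finite precisely when $\alpha+\beta>1$, so the parametrix series converges and yields stable-type two-sided bounds and $C^{\alpha+\beta}_b$-regularity for the fundamental solution, after which the Duhamel representation delivers \eqref{esti0} from $g\in C^{1+\beta}_b$. (This parametrix construction in fact handles $\alpha\in(1,2)$ uniformly as well, the drift error kernel then being integrable without any extra condition.) So the genuinely delicate step is the convergence of this expansion in the supercritical case — which is what forces $\alpha+\beta>1$ — whereas the non-stable-like structure of $\nu$ itself contributes only the harmless bounded corrections identified in the first step.
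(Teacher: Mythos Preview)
Your plan is correct, but the paper takes a different and shorter route. The authors do not truncate $\nu$ or split the analysis into a subcritical perturbative argument for $\alpha>1$ and a parametrix construction for $\alpha<1$. Instead they work with the full operator $\mathcal L$ and compute its L\'evy symbol explicitly: writing $\nu(\de y)=e^{-\lambda|y|}\nu_\alpha(\de y)$ in polar coordinates and using the identity $\int_0^\infty(e^{-zr}-1)r^{-1-\alpha}\,\de r=-\Gamma(-\alpha)z^\alpha$ for $\mathrm{Re}\,z>0$, they obtain the two-sided bound $-\eta|m|^\alpha-C\le I_1+I_2\le -\eta^{-1}|m|^\alpha+C'$ on the exponent. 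Fourier inversion then yields the integral heat-kernel estimates $\int|y|^\gamma p_\alpha(y,t)\,\de y\lesssim t^{\gamma/\alpha}$ and $\int|y|^\beta|D^k_y p_\alpha(y,t)|\,\de y\lesssim t^{-(k-\beta)/\alpha}$ for $t\in(0,1]$, $k=1,2$. These are precisely the inputs needed by the Schauder theorem of de~Raynal--Menozzi--Priola \cite{DR} for drifted fractional operators (which already covers the supercritical regime under $\alpha+\beta>1$), so the paper simply cites that result and observes that the extra zeroth-order term $(\dive f)\,p$ with $\dive f\in C^\beta_b$ is harmless.

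In short, the paper's proof is: show the tempered symbol still behaves like the pure stable one, then invoke a single black box that handles both ranges of $\alpha$ at once. Your route is more self-contained---your interpolation/absorption argument for $\alpha\in(1,2)$ avoids any appeal to the supercritical machinery---but at the cost of two structurally different arguments. For $\alpha\in(0,1)$ your parametrix sketch is essentially what \cite{DR} carries out, so there the two approaches coincide in substance; the paper just outsources that step.
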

\begin{proof}
See Appendix I.
\end{proof}
Then by Theorem \ref{schauder}, we obtain the following result.
\begin{col}(Existence and uniqueness of the solution for the nonlocal FPE.)
 Let $\a\in (0,1)\cup
(1,2)$ be fixed, then there exists a unique solution $u\in C_b^{\a+\beta}([0,T],\mathbb{R}^d)$ to the nonlocal FPE
 (\ref{FPE3}), which also satisfies the Schauder estimate (\ref{esti0}).
\end{col}
\begin{proof}
Define a family of linear operators by  $T_\theta =(1-\theta)T_0+\theta T_1$, where
\begin{equation*}
\begin{split}
  T_0p_\tau&=p_\tau+\mathcal{L}^{*}p_\tau-f\nabla p_\tau,\\
  T_1p_\tau&=p_\tau+\mathcal{L}^{*}p_\tau-f\nabla p_\tau-\dive{f} \cdot  p_\tau.
\end{split}
  \end{equation*}
Note that $T_0$ is the case considered in \cite[Theorem 3]{DR}, which  maps $C_b^{\a+\beta}([0,T],\mathbb{R}^d)$ onto $C_b^{\beta}([0,T],\mathbb{R}^d)$. The solvability of the nonlocal FPE
 \eqref{FPE3} is equivalent to the invertibility of the operator $T_\theta$. By the method of continuity, we obtain the required result.
\end{proof}
{\bf{C: Numerical analysis}}

In this subsection, we devised a convergent finite difference method to simulate the nonlocal FPE \eqref{FPE3}.\\
{\bf{C1: A convergent finite difference scheme }}\\
Consider the problem in two cases: one case is on a bounded standard domain $D=(-1, 1)$ with the absorbing condition, and the other is on the infinite domain $\mathbb{R}$.  The absorbing condition is that the ``partical'' $X_t$ disappears or is killed when $X_t$ is outside a bounded domain $D$. So the probability density function $p(x,t)$
of being outside of the bounded domain $D$ is zero, i.e.,
\begin{equation}
    p(x,t) = 0, \;\;\; x \notin (-1,1).
\end{equation}
In fact,  by the following transformation
\begin{equation}
\begin{aligned}
\tilde{x}&=\frac{2x-(a+b)}{b-a}, ~~\tilde{y}=\frac{2y}{b-a},     \\
u(\tilde{x},t)&=p\left(\frac{(b-a)\tilde{x}+(a+b)}{2},t\right).
\end{aligned}
\end{equation}
We can also convert the finite interval $\widetilde{{D}}=(a,b)$ into standard domain ${D}=(-1,1)$. Then the nonlocal FPE  \eqref{FPE3} can be rewritten as
\begin{equation}
\label{v}
\begin{aligned}
&\frac{\partial u(\tilde{x},t)}{\partial t}=-\frac{\partial\left(\tilde{f}(\tilde{x}) u(\tilde{x},t)\right)}{\partial{{\tilde{x}}}}\\
&+ \int_{\mathbb{R}\setminus \{0\}}\left[u(\tilde{x}+\tilde{y},t)-u(\tilde{x},t)-I_{\{|\tilde{y}|<\frac{2}{b-a}\}}(\tilde{y})\tilde{y}\frac{\partial u(\tilde{x},t)}{\partial \tilde{x}}\right]
\frac{\widetilde{C}_{\alpha}}{e^{\tilde{\lambda}|\tilde{y}|}|\tilde{y}|^{1+\alpha}}d\tilde{y},  ~~\tilde{x} \in (-1,1),
\end{aligned}
\end{equation}
where
\begin{equation}
\begin{aligned}
\tilde{f}(\tilde{x})&=\frac{2}{b-a}f\left(\frac{(b-a)\tilde{x}+a+b}{2}\right), \\
 \widetilde{C}_{\alpha}&=(\frac{b-a}{2})^{-\alpha}C_\alpha, ~~\tilde{\lambda}=\frac{b-a}{2} \lambda.
\end{aligned}
\end{equation}
Next, the detailed procedure of the proposed  algorithm will
be described as follows:
\par
$(i)$ Step one is to introduce the following function,
\begin{equation}
 \int_s^{\infty} x^{-\varrho} e^{- x}\dx = s^{-\frac{\varrho}{2}}e^{-\frac{s}{2}}W_{-\frac{\varrho}{2},\frac{1-\varrho}{2}}(s), \text{for}\; s>0,
\end{equation}
where $W $ is the Whittaker W function.

The principal value of integral
   $\int_{\mathbb{R}\setminus \{0\}} 1_{\{|y|<1\}}(y)y \cdot \frac{\partial p(x,t)}{\partial x} \nu(\mathrm{d}y)$ vanishes. Thus the integral term  of equation \eqref{FPE3} becomes
\begin{small}
\begin{equation}
\label{FD}
\begin{aligned}
&\int_{\mathbb{R}\setminus \{0\}}\left[p(x+y,t)-p(x,t)\right]\nu(\mathrm{d}y)\\
&= C_{\a}\int_{-\infty}^{-1-x} \frac{p(x+y,t)-p(x,t)}{e^{\lam |y|}|y|^{1+\a}} \dy +C_{\a}\int_{-1-x}^{1-x} \frac{p(x+y,t)-p(x,t)}{e^{\lam |y|}|y|^{1+\a}} \dy+ C_{\a}\int_{1-x}^{\infty} \frac{p(x+y,t)-p(x,t)}{e^{\lam |y|}|y|^{1+\a}} \dy   \\
&= -C_{\a} p(x,t) \left[W_1(x)+W_2(x) \right]+C_{\a} \int_{-1-x}^{1-x} \frac{p(x+y,t)-p(x,t)}{e^{\lam |y|}|y|^{1+\a}} \dy,
\end{aligned}
\end{equation}
\end{small}
where
\begin{equation}
\begin{aligned}
    W_1(x)&= \lam^{\frac{\a-1}{2}}(1+x)^{-\frac{\a+1}{2}} e^{-\frac{\lam (1+x)}{2}} W_{-\frac{1+\a}{2},-\frac{\a}{2}}(\lam(1+x)),\\
      W_2(x)&= \lam^{\frac{\a-1}{2}}(1-x)^{-\frac{\a+1}{2}} e^{-\frac{\lam (1-x)}{2}} W_{-\frac{1+\a}{2},-\frac{\a}{2}}(\lam(1-x)).
\end{aligned}
\end{equation}

$(ii)$ Step two is to divide the interval $[-2,2]$ into $4J$ subintervals and define $x_j=jh$ for $-2J\leq j\leq 2J$ integer, where $h=\frac{1}{J}$. Using central difference scheme for the first and two derivatives and modifying the ``punched-hole" trapezoidal rule in the nonlocal term, we get the discretization scheme of \eqref{FPE3}, i.e.,
\begin{equation}
\begin{aligned}
\label{eq:DiscreteFPE}
   \frac{d{P_j}}{dt}:=& C_h\frac{P_{j+1}-2P_j+P_{j-1}}{h^2} -[(fP)_{x,j}^+ +(fP)_{x,j}^-]\\ 
     &- C_{\a}  \left[W_1+W_2 \right]P_j
        +  C_{\a} h \sum\limits_{k=-J-j, k\neq 0}^{J-j}\!{''}\frac{P_{j+k}-P_j}{e^{\lam |x_k|}|x_k|^{1+\a}},
\end{aligned}
\end{equation}
where $C_h = - C_{\a} \zeta(\a-1)h^{2-\a}$, $(fP)_{x,j}^+$ and $(fP)_{x,j}^-$ are defined as the global Lax-Friedrichs flux splitting \cite{Nsl},
i.e., $(fP)^{\pm} = \frac12 (fP \pm MP)$ with $M=\max{|f(x)|}$.  Here $\sum^{''}$ denotes  the quantities corresponding to the two end summation
indices are multiplied by $1/2$.

It  is noted that
for solving the initial value problem of \eqref{FPE3} on the infinite domain $\mathbb{R}$, the semi-discrete equation becomes
\begin{align}
\label{semidis}
  \frac{d{P_j}}{dt}:=& C_h\frac{P_{j+1}-2P_j+P_{j-1}}{h^2} -[(fP)_{x,j}^+ +(fP)_{x,j}^-]
        +  C_{\a} h \sum\limits_{k=-J-j, k\neq 0}^{J-j}\!{''}\frac{P_{j+k}-P_j}{e^{\lam |x_k|}|x_k|^{1+\a}},
\end{align}
where $J=\frac{\widetilde{L}}{h}$ and $\widetilde{L}$ is large enough so that the results are convergent.

($iii$) Step three is to give the condition of numerical scheme satisfying the discrete maximum principle. For the absorbing boundary condition and forward Euler scheme for time derivative, the scheme (\ref{eq:DiscreteFPE})
 satisfies the discrete maximum principle with $f=0$, if  $\bigtriangleup t$ and $h$ satisfy the following condition,
 \bear  \label{MPcondition}
    \frac{\bigtriangleup t }{h^\a}  \leq \frac{1}{2 C_{\a} [1+\frac{1}{\a}-\zeta(\a-1)] }.
 \enar


%


($iv$) Step four is to illustrate the  convergence analysis. The numerical solution $P^{n}_{j}$ of \eqref{semidis} converges to the analytic solution to \eqref{FPE3} for $x_j$ in $[-\tilde{L}/2, \tilde{L}/2]$ when the refinement path satisfies \eqref{MPcondition} and the length of the integration interval $2\tilde{L}$  in \eqref{semidis} tends to $\infty$.
\begin{remark}
  The detailed proofs of Step ($iii$) and Step ($iv$) see  Appendix II and III respectively.
\end{remark}
\noindent{\bf{C2: Numerical experiments}}\\
Here we present an example to illustrate our numerical method. We take the initial condition $p(x,0) = \sqrt{\frac{40}{\pi}}e^{-40x^2}$ and the finite interval $D=(-4,4)$. First, we consider the effect of  stability index $\alpha$ in Fig.\ref{difalp}. We
take $\alpha=0.2,0.6,1.2,1.6$ without drift coefficient $f=0$ at time
$t=0.5$.  As we see,
the larger the $\alpha$ becomes, the flatter the probability density function is.
Then, we illustrate the evolution for the probability density function with $\alpha=1.5, f=x-x^3$ at different
time $t=0.4, 0.8, 1.6$ and other factors fixed in Fig.\ref{difft}.  As time goes on,
the `particles' are gradually tend to stay at the two stable points. The maxima of the probability density function
approach $x=\pm 1$. Next, we present the effect of tempering parameter $\lambda$ in Fig.\ref{diflam}.
From the figure, we see that value of the density become larger near the origin when $\lambda$ is larger.
It is the opposite far away from the origin. We also present the effect of  drift terms ($f=0, x-x^3$) for probability density function with  $ \a=1.5, \lambda=0.01$  at time $t=1$. The particles centered at $x=0$ for $f=0$. While, for $f=x-x.^3$, the points $x=+-1$ are the two stable steady states. So the particles centered at one of the stable states.  Lastly, we add the Monte Carlo solutions
of stochastic differential equations to verify the correctness of the finite difference method  for $ \a=0.5, \lambda=0.01, f=0$  at time
$t=4$ in Fig.\ref{VerfyMC}.

\befig[h]
 \begin{center}
\includegraphics[width=0.8\linewidth]{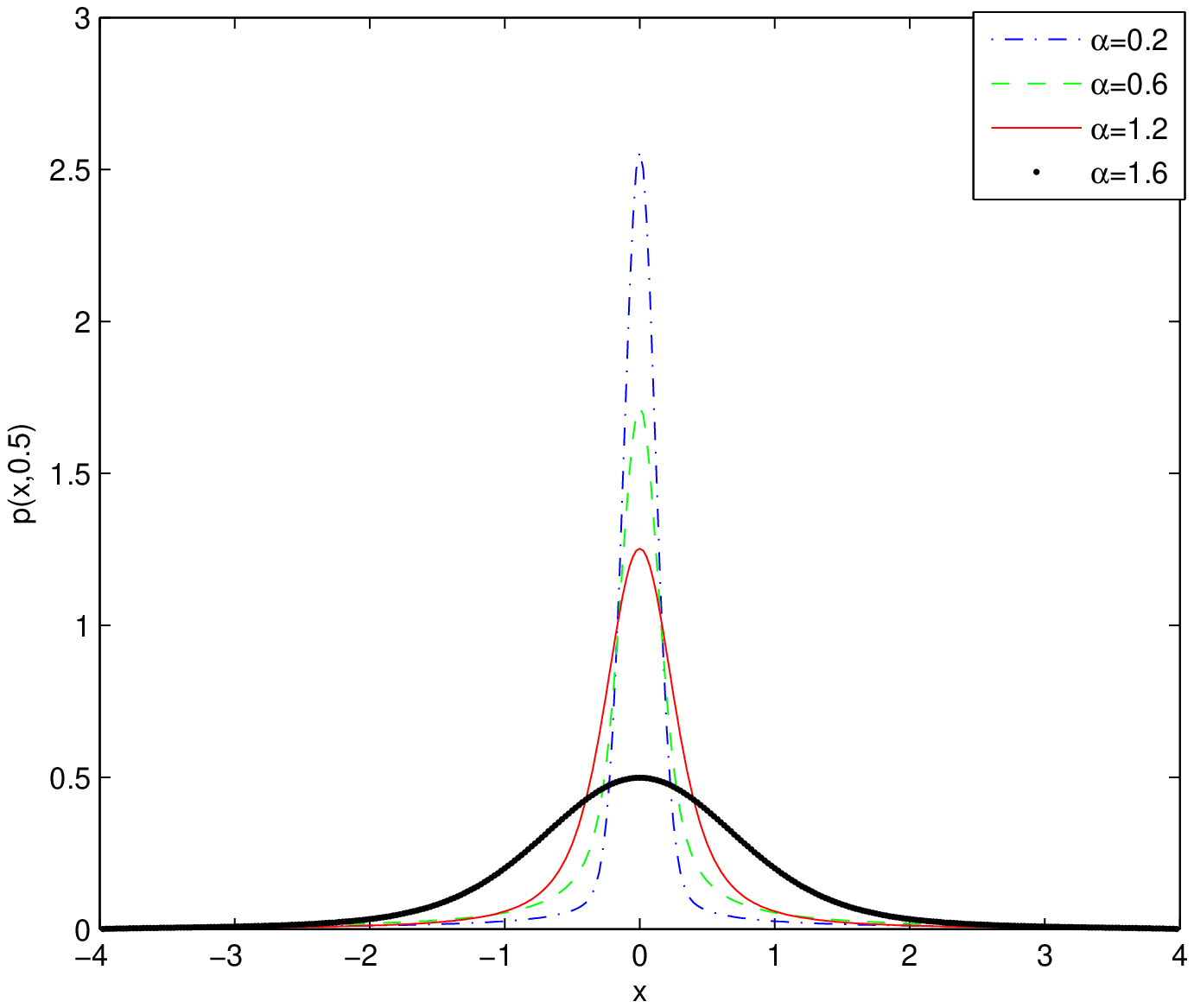}
\vspace{-0.3in}
\end{center}
\caption{ The effect of stability index $\alpha=0.2,0.6,1.2,1.6$ for probability density function without drift coefficient ($f=0$) and $\lambda=0.01$ at time $t=0.5$. }
\label{difalp}
\enfig

\befig[H]
 \begin{center}
\includegraphics[width=0.8\linewidth]{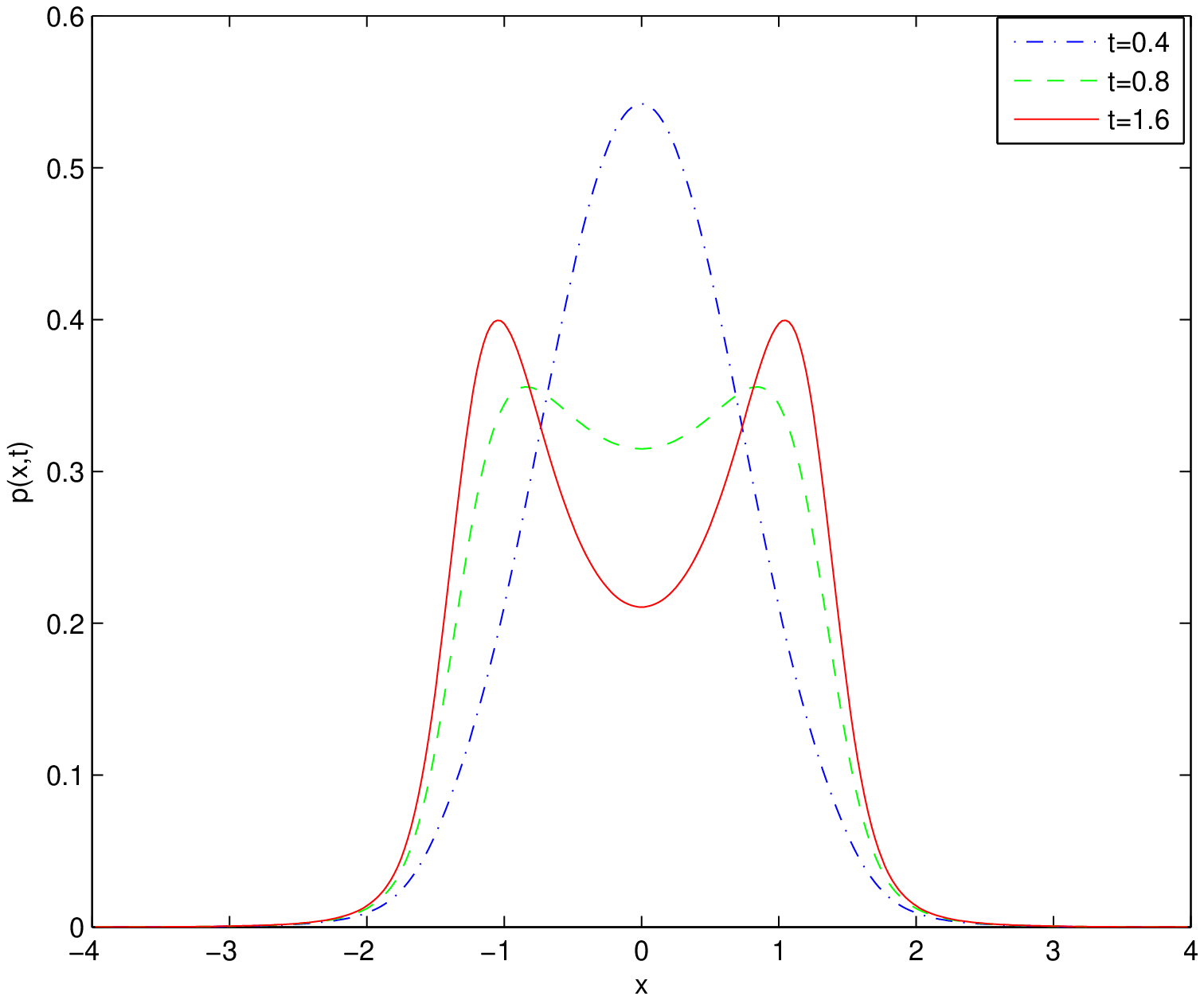}
\vspace{-0.3in}
\end{center}
\caption{ The evolution for probability density function with drift coefficient $f=x-x^3, \a=1.5, \lambda=0.01$  at time
$t=0.4,0.8,1.6$. }
\label{difft}
\enfig

\befig[H]
 \begin{center}
\includegraphics[width=0.8\linewidth]{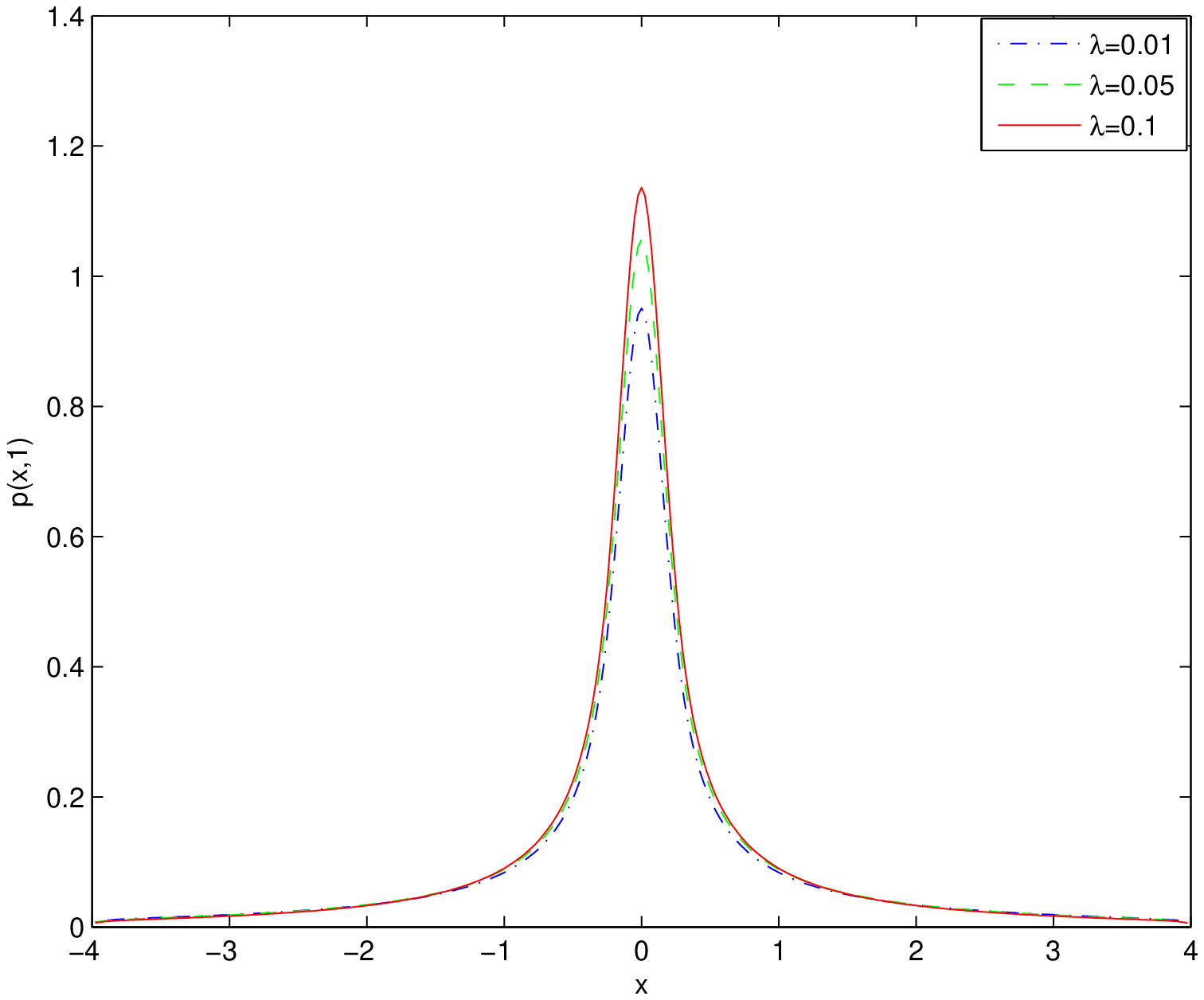}
\vspace{-0.3in}
\end{center}
\caption{ The effect of positive tempering parameter  $\lambda=0.01,0.05,0.1$ for probability density function without drift coefficient ($f=0$) at time $t=1$.  }
\label{diflam}
\enfig

\befig[H]
 \begin{center}
\includegraphics[width=0.8\linewidth]{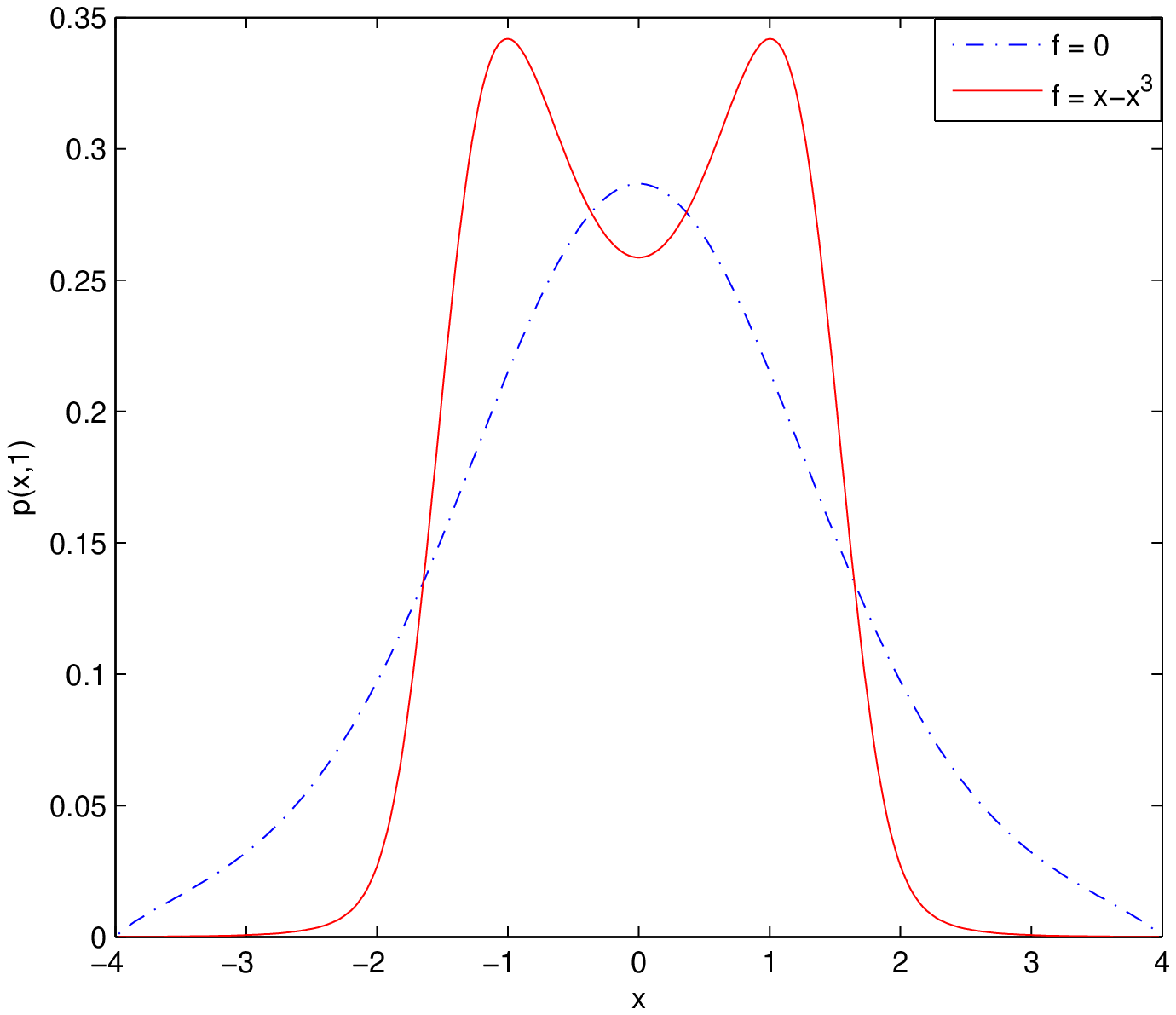}
\vspace{-0.3in}
\end{center}
\caption{ The effect of  drift terms ($f=0, x-x^3$) for probability density function with  $ \a=1.5, \lambda=0.01$  at time
$t=1$. }
\label{CompDrift}
\enfig

\befig[H]
 \begin{center}
\includegraphics[width=0.8\linewidth]{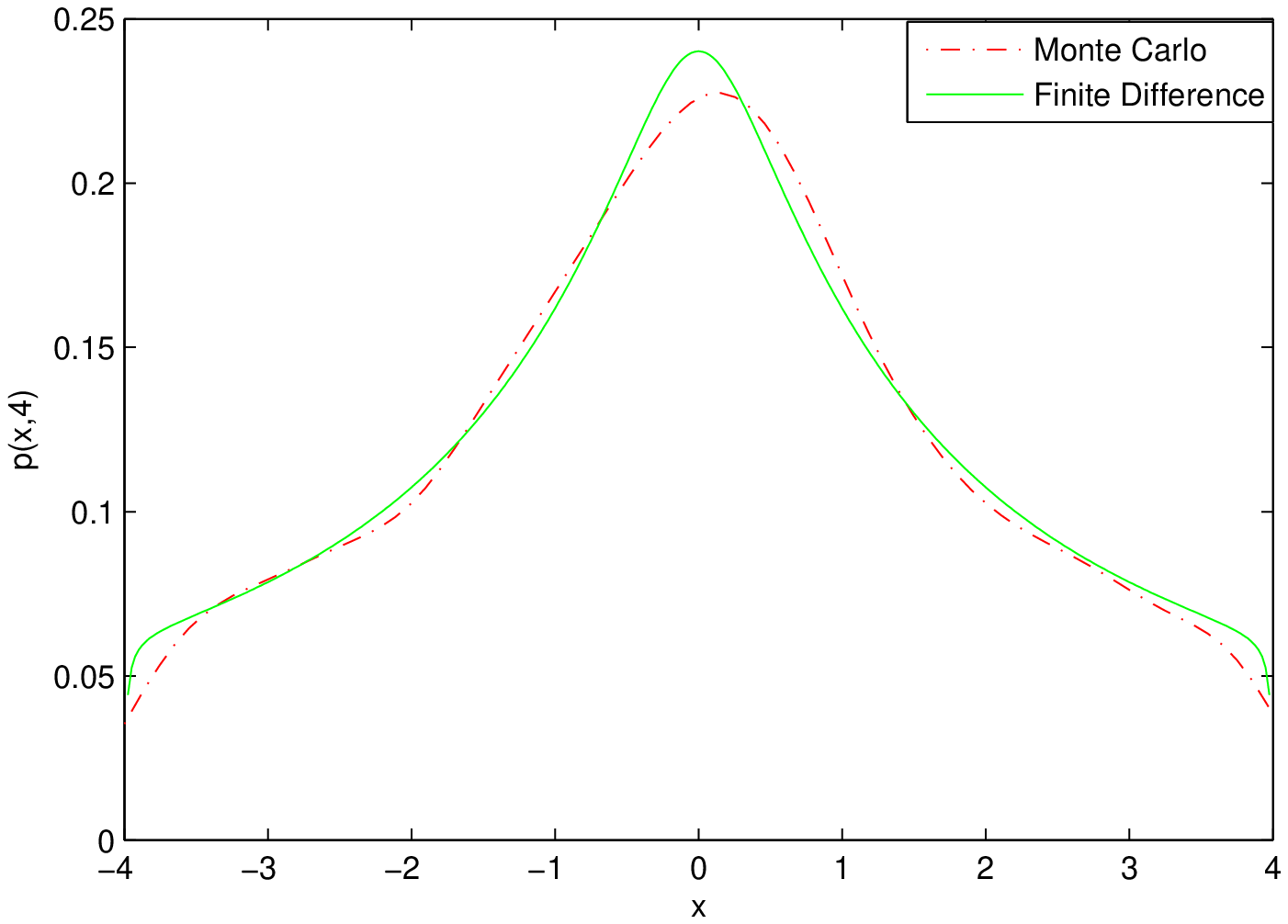}
\vspace{-0.3in}
\end{center}
\caption{ Compare the Monte Carlo simulation with finite difference method for $ \a=0.5, \lambda=0.01, f=0$  at time
$t=4$. }
\label{VerfyMC}
\enfig

\section{ Application to a nonlinear filtering problem}
The data assimilation method represented by nonlinear filtering has been widely used in many fields, which is a procedure to extract system state information by observations. Here we apply our aforementioned theoretical and numerical results to a nonlinear filtering system by simulating a nonlocal Zakai equation.

\subsection{The strong form of Zakai equation}

Consider the following signal-observation systems on $\mathbb{R}^2$
\begin{equation}
\label{so}
\left\{
\begin{aligned}
dX_t&=f(X_t)dt+dL_t, \\
dY_t&=h(X_t)dt+ d{W_t},
\end{aligned}
\right.
\end{equation}
where $f$ is given deterministic measurable function and $ h $ is a bounded measurable continuous function. $W_t$ is the standard Brownian motion, which is independent of $L_t$.

Let
\begin{equation}
\mathcal{Z}_t=\bf{{\sigma}}(Y_s:0\leq s\leq t)\vee \mathcal{N},
\end{equation}
where $\mathcal{N}$ is the  collection of all $\mathbb{P}$ -negligible sets of $(\Omega,\mathcal{F})$.
Define $\mathcal{Z}=\sigma(\bigcup_t\mathcal{Z}_t)$. By the version of Girsanov's change of measure theorem, we obtain a new probability measure $\widetilde {\mathbb{P}}$, such that the
observation $Z_t$ becomes $ \widetilde{\mathbb{P}}$-independent of the signal variables $(X_t,Y_t)$. This can be done through
\begin{equation}
\frac{d\widetilde {\mathbb{P}}}{d\mathbb{P}}= \exp\left(-\sum\limits_{i = 1}^m {\int_0^t {{h^i}({X_s})dW_s^i} }- \frac{1}{2}\sum\limits_{i = 1}^m {\int_0^t {{h^i}{{({X_s})}^2}} } ds\right).
\end{equation}
For every bounded differentiable function $\phi$, by the Kallianpur-Striebel formula, we have the following representation
\begin{equation}
\mathbb{E}[\phi(X_t,Y_t)|\mathcal{Z}_t]=\frac{\widetilde{\mathbb{E}}\left[\widetilde{\mathbb{R}}_t\phi(X_t, Y_t)|\mathcal{Z}\right]}{\widetilde{\mathbb{E}}\left[\widetilde{\mathbb{R}}_t|\mathcal{Z}\right]},
\end{equation}
where
\begin{equation}
\widetilde{\mathbb{R}}_t=\left.\frac{d\widetilde {\mathbb{P}}}{d\mathbb{P}}\right|_{\mathcal{Z}_t}.
\end{equation}

The unnormalized conditional distribution of $\varphi(X_t)$, given $Y_t$, is defined as
\begin{equation}
{{P}_t}(\varphi)=\widetilde{\mathbb{E}}\left[\widetilde{\mathbb{R}}_t\phi(X_t, Y_t)|\mathcal{Z}_t\right].
\end{equation}
Heuristically, if the unconditional distribution of the signal ${P}_t(\varphi)$ has a density $p(x,t)$ with respect to Lebesgue measure for all $t>0$, i.e.,
\begin{equation}
{P}_t(\varphi)=\int_{\mathbb{R}}\varphi(x)p(x,t)dx,
\end{equation}
then the unnormalized density $p(x,t)$ satisfies the following Zakai equation.
\begin{theorem}
\label{thden}
Under Hypotheses $\bf{H1}$-$\bf{H2}$ and $\bf{H3}$, the probability density function $p(x,t)$ satisfies the following Zakai equation, i.e.,
\begin{equation}
\label{dens1}
dp(x,t)={{A}}^{*}p(x,t)dt+ h(x)p(x,t)dY_t,
\end{equation}
where
\begin{equation}
{{A}}^{*}p(x,t)=-\nabla\cdot\left(f(x)p(x,t)\right)+\int_{{\mathbb{R}^d}\backslash \{ 0\}}\left[p(x+z,t)-p(x,t)\right]\nu(dz).
\end{equation}
\begin{proof}
The proof is similar to \cite [Theorem 5]{Zhang}.
\end{proof}
\end{theorem}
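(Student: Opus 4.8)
The plan is to follow the classical derivation of the Zakai equation (as in \cite{Zhang}), adapted to the nonlocal generator coming from the tempered stable noise. Write $A\varphi(x):=f(x)\cdot\nabla\varphi(x)+\mathcal{L}\varphi(x)$ for the generator of the signal $X_t$ acting on $\varphi\in C_b^2(\mathbb{R}^d)$; because the tempered L\'evy measure $\nu$ is symmetric and has finite moments of all orders, $A\varphi$ is well defined and bounded, and It\^o's formula gives $d\varphi(X_t)=A\varphi(X_t)\,dt+dM_t^\varphi$ with $M^\varphi$ a $\mathbb{P}$-martingale (the compensated jump integral against $L$). First I would record the two facts about the reference measure: under $\widetilde{\mathbb{P}}$ the observation $Y_t$ is a Brownian motion which is $\widetilde{\mathbb{P}}$-independent of $(X_t)$, and the likelihood process solves the linear SDE $d\widetilde{\mathbb{R}}_t=\widetilde{\mathbb{R}}_t\,h(X_t)\,dY_t$, $\widetilde{\mathbb{R}}_0=1$. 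Recall also the Kallianpur--Striebel representation $P_t(\varphi)=\widetilde{\mathbb{E}}[\widetilde{\mathbb{R}}_t\varphi(X_t)\mid\mathcal{Z}_t]$ already stated above.

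Next I would apply the product rule to $\widetilde{\mathbb{R}}_t\varphi(X_t)$. Since $\widetilde{\mathbb{R}}$ is driven only by $Y$ while $\varphi(X)$ is driven only by $L$, and $L$ is independent of $W$, the quadratic covariation $[\widetilde{\mathbb{R}},\varphi(X)]$ vanishes, so
\[
\widetilde{\mathbb{R}}_t\varphi(X_t)=\varphi(X_0)+\int_0^t\widetilde{\mathbb{R}}_s A\varphi(X_s)\,ds+\int_0^t\widetilde{\mathbb{R}}_s h(X_s)\varphi(X_s)\,dY_s+\int_0^t\widetilde{\mathbb{R}}_s\,dM_s^\varphi .
\]
Taking $\widetilde{\mathbb{E}}[\,\cdot\mid\mathcal{Z}_t]$, using that the last integral is a $\widetilde{\mathbb{P}}$-martingale orthogonal to $\mathcal{Z}$ (hence has zero conditional expectation), and applying a conditional stochastic Fubini theorem to pull $\widetilde{\mathbb{E}}[\,\cdot\mid\mathcal{Z}_s]$ inside the $ds$- and $dY_s$-integrals, I would arrive at the weak Zakai equation
\[
P_t(\varphi)=P_0(\varphi)+\int_0^t P_s(A\varphi)\,ds+\int_0^t P_s(h\varphi)\,dY_s .
\]

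Finally, under the hypothesis $P_t(\varphi)=\int_{\mathbb{R}^d}\varphi(x)p(x,t)\,dx$, I would integrate by parts in the drift term: $\int(f\cdot\nabla\varphi)p\,dx=-\int\varphi\,\nabla\cdot(fp)\,dx$, and since $\nu(dy)$ depends only on $|y|$ the operator $\mathcal{L}$ is formally self-adjoint, so $\int(\mathcal{L}\varphi)p\,dx=\int\varphi\,\mathcal{L}p\,dx$; hence $\int(A\varphi)p\,dx=\int\varphi\,A^{*}p\,dx$ with $A^{*}p=-\nabla\cdot(fp)+\mathcal{L}p$, and likewise $P_s(h\varphi)=\int\varphi\,hp\,dx$. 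Substituting and using that $\varphi\in C_c^\infty(\mathbb{R}^d)$ is arbitrary gives $\int\varphi(x)\big[p(x,t)-p(x,0)-\int_0^t A^{*}p(x,s)\,ds-\int_0^t h(x)p(x,s)\,dY_s\big]dx=0$ for all such $\varphi$, whence the strong (pointwise, in the It\^o sense) form $dp(x,t)=A^{*}p(x,t)\,dt+h(x)p(x,t)\,dY_t$.

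The main obstacle I anticipate is the rigorous justification of the conditional stochastic Fubini step, i.e.\ showing $\widetilde{\mathbb{E}}\big[\int_0^t\widetilde{\mathbb{R}}_s h(X_s)\varphi(X_s)\,dY_s\mid\mathcal{Z}_t\big]=\int_0^t P_s(h\varphi)\,dY_s$; this requires an $L^2$ (or uniform integrability) bound on $\widetilde{\mathbb{R}}_t\varphi(X_t)$, which is exactly where the boundedness of $h$ (Hypothesis H.3) and the finiteness of all moments of the tempered stable process are needed — a point at which the $\alpha$-stable case would fail. Once that bound is secured the conditional-expectation interchange and the passage from the weak to the strong form (a standard duality argument using the symmetry of $\nu$) are routine, and the argument parallels \cite[Theorem 5]{Zhang}.
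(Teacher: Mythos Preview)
Your outline is correct and is precisely the classical derivation that the paper defers to by citing \cite[Theorem~5]{Zhang}: apply the product rule to $\widetilde{\mathbb{R}}_t\varphi(X_t)$, condition on $\mathcal{Z}_t$ using a stochastic Fubini argument to obtain the weak Zakai equation, then dualize via the symmetry of $\nu$ to pass to the strong form. One small correction: your remark that ``the $\alpha$-stable case would fail'' at the $L^2$/Fubini step is not accurate, since the cited reference \cite{Zhang} carries out exactly this argument for multiplicative $\alpha$-stable noise; the tempering merely makes the moment bounds easier, not newly possible.
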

\subsection{ An example}

Here we consider the gradient system  as the signal system with time-independent bistable potential,  which describes the evolution of physical phenomena in ideal fluctuating environments.
\begin{equation}
\label{example12}
\begin{aligned}
dX_t&=-\frac{\partial V(X_t,t)}{\partial x} dt+dL_t,
\end{aligned}
\end{equation}
where $V(x,t)=-\frac{1}{2}x^2+\frac{1}{4}x^4$.

In this experiment, we assume that the observation system is given by
\begin{equation}
\label{example13}
dY_t=\cos(X_t)dt+dW_t,
\end{equation}

Using Theorem \ref{thden}, the strong form of Zakai equation for the signal-observation system \eqref{example12}-\eqref{example13} is
\begin{equation}
\label{exam3}
dp(x,t)={\widetilde{\mathcal{L}}}^{*}p(x,t)dt+\cos x\cdot p(x,t) dY_t,
\end{equation}
where
\begin{equation*}
\begin{aligned}
{\widetilde{\mathcal{L}}}^{*}p(x,t)=&\ -\frac{\partial\left(f(x)p(x,t)\right)}{\partial x}
+\int_{{\mathbb{R}}\backslash \{ 0\}}\left[p(x+z,t)-p(x,t)\right]\nu(dz).
\end{aligned}
\end{equation*}

In Fig.~\ref{sign_obs}, we simulate the signal-observation processes in Eq.~(\ref{example12}) and
(\ref{example13})(see \cite{Jum}). We take $\lambda=0.01, \alpha =1.5, f=x-x^3, X(0)=-1, Y(0)=-1 $ with the terminal time $t=1$.
The solution of Zakai equation in Eq.~(\ref{exam3}) with $ p(x,0) = \sqrt{\frac{40}{\pi}}e^{-40x^2}$ is illustrated at time $t=1$ in Fig.~\ref{zakaiTemper}.

\befig[h]
 \begin{center}
\includegraphics[width=0.8\linewidth]{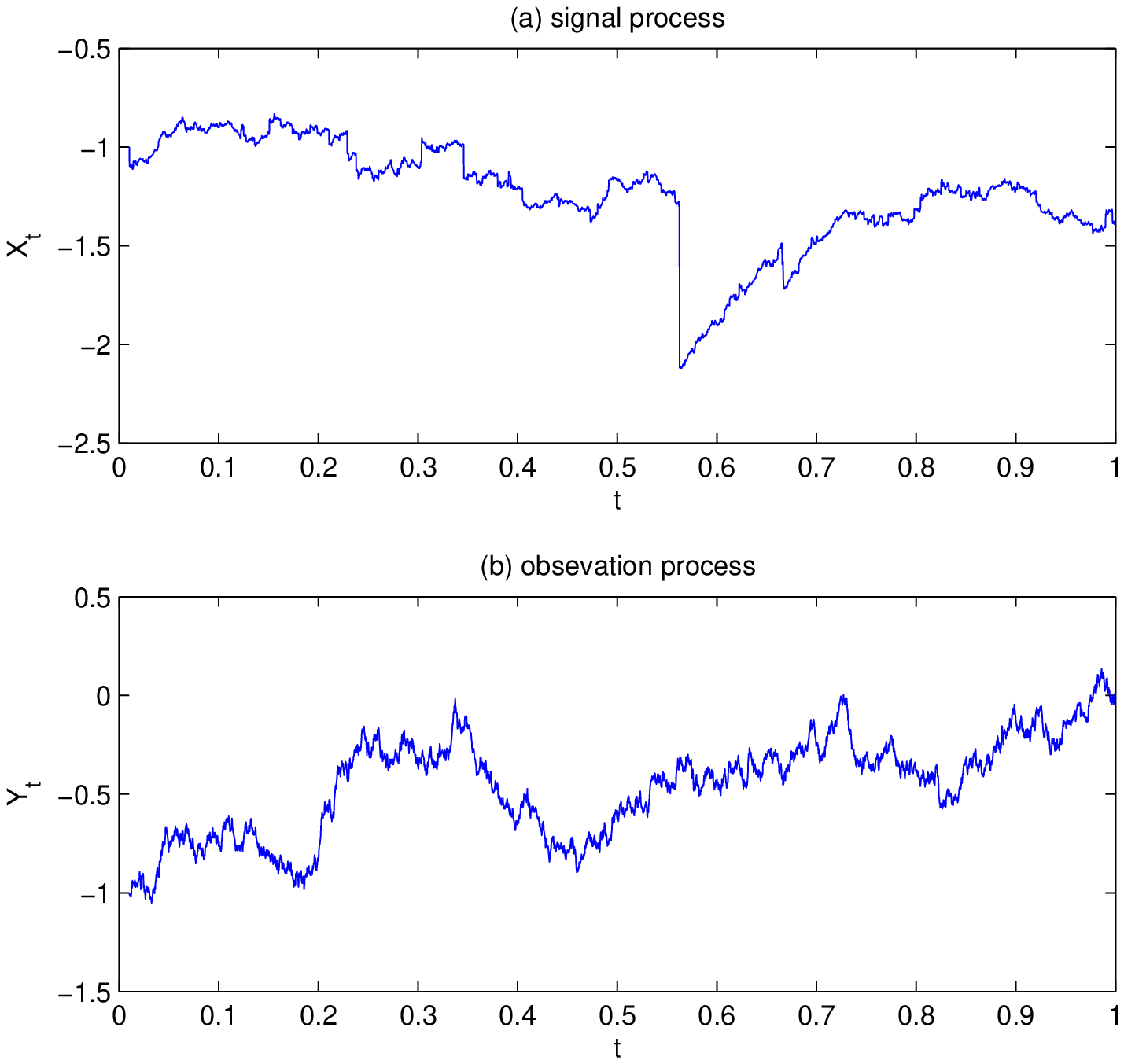}
\vspace{-0.3in}
\end{center}
\caption{ (a) The graph plots the signal process for $\lambda=0.01, \alpha =1.5, f=x-x^3, X(0)=-1, Y(0)=-1$. (b) The graph plots the observation process for $\lambda=0.01, \alpha =1.5, f=x-x^3, X(0)=-1, Y(0)=-1$. }
\label{sign_obs}
\enfig
\befig[H]
 \begin{center}
\includegraphics[width=0.8\linewidth]{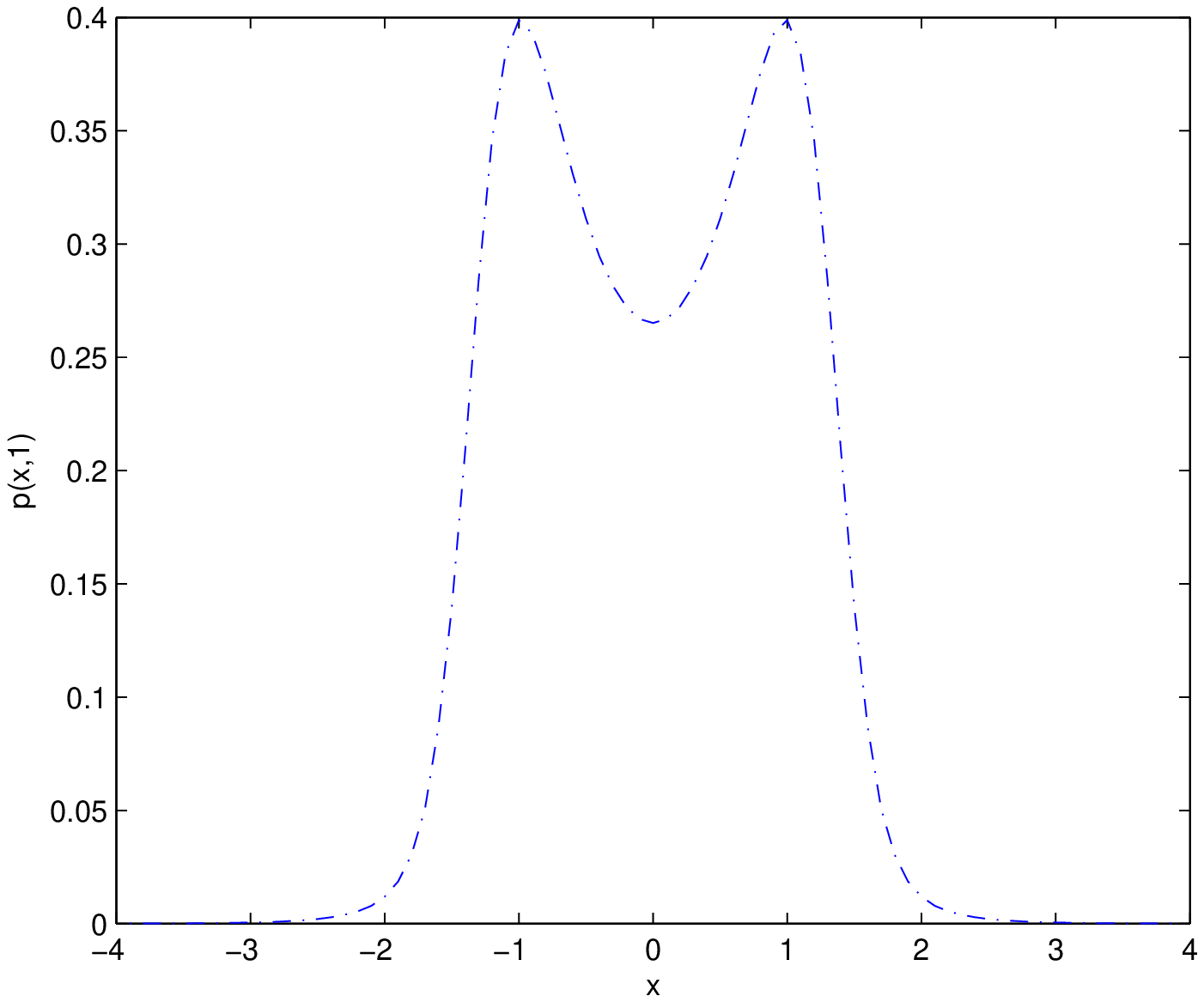}
\vspace{-0.3in}
\end{center}
\caption{ A sample path of Zakai equation with  $ p(x,0) = \sqrt{\frac{40}{\pi}}e^{-40x^2}$ at time
$t=1$.  }
\label{zakaiTemper}
\enfig

\section{Conclusion}
\par
In this paper, we have studied the superposition principle and numerical analysis for the nonlocal Fokker-Planck equation associated with a
stochastic dynamical system with tempered stable L\'{e}vy noise. Firstly, we have shown the superposition principle for probability measure-valued solution to a nonlocal Fokker Planck equation, which yields the equivalence between martingale problem for the stochastic dynamical system with tempered stable noise and  the corresponding nonlocal parabolic equation. Secondly, we have derived a  global parabolic Schauder estimate for the nonlocal Fokker-Planck equation under the condition $\alpha+\beta > 1$. This leads to the existence and uniqueness have been obtained in the $L^\infty\l(\left[0,T\right],C^{\a+\beta}_b\r)$. Thirdly, we have constructed a finite difference scheme to simulate the nonlocal Fokker-Planck equation. Under a specific condition, the scheme has been shown to satisfy the discrete maximum principle and to be convergent. Finally, we have applied the nonlocal Fokker-Planck equation to obtain the strong form of Zakai equation  and conduct its numerical assimilation. The results established in this paper can be used to examine dynamical behaviors for financial markets, climate dynamics and  physics.

\section*{Acknowledgements}
The research of L.Lin was supported  by the NSFC grants 11531006 and 11771449.  The research of X. Wang was supported by the NSFC grant 11901159. The research of Y. Zhang was supported by the NSFC grant 11901202.

\section*{Appendix}
\setcounter{equation}{0}
\setcounter{subsection}{0}
\renewcommand{\theequation}{A\arabic{equation}}
\renewcommand{\thesubsection}{A\arabic{subsection}}
\subsection*{Appendix ~I}
\begin{proof}
{ \bf{Step 1.}} For the case $\a\in(0,1),$ we set $\tau=T-t$. Then the equation \eqref{FPE3} can be rewritten as follows
 \begin{equation}\label{new1}
 \begin{cases}
   p_\tau-f\nabla p+\mathcal{L}^{*}p=\dive{f}\cdot p\\
   p(x, T)=g(x),
   \end{cases}
 \end{equation}
where $T>0$ is a fixed final time.

In the following, we will decompose the L\'{e}vy measure and give a bound for the character function.
Since  $\nu(dy)=e^{-\lambda|y|}\cdot\nu_{\a}(dy),$ where $\nu_{\a}$ is a symmetric stable L\'{e}vy measure. By the  polar coordinates $y=r\xi, (r,\xi)\in \mathbb{R_+}\times \mathbb{S}^{d-1}$, where $\mathbb{S}^{d-1}$ represents the $d-1$-dimensional sphere, then the jump measure $\nu_{\a}$  can be decomposed as
$$\nu_{\a}(B)=\int_{\mathbb{S}^{d-1}}\hat{\mu}(d\xi)\int_0^\infty 1_{B}(r\xi)\frac{dr}{r^{1+\a}},\; ~~\text{for} \;B\in \mathcal{B}{(\mathbb{R}^d)},$$
where $\hat{\mu}$ is a finite measure on $\mathbb{S}^{d-1}.$

 Note that
$\hat{\mu}$ is non-degenerate. Then there exists $\eta\geq1$, such that for all $m\in \mathbb{R}^{d},$
\begin{equation}\label{1231}
\eta^{-1}|m|^{\a}\leq\int_{\mathbb{S}^{d-1}}|(m,\xi)|^{\a}\hat{\mu}(d\xi)
\leq\eta|m|^{\a},~~ \a\in(0,1).
\end{equation}
The L\'{e}vy symbol associated with $\mathcal{L}$ is given
\begin{equation*}
\label{ine}
\begin{split}
\psi(\lambda)&=\exp{\left\{\int_{\mathbb{R}^{d}}\left(e^{(im, y)}-1\right)\cdot e^{-\lambda|y|}\nu_{\a}(dy)\right\}}\\
&=\exp{\left\{\int_{\mathbb{S}^{d-1}}\hat{\mu}(d\xi)\int_0^\infty(e^{i(m, r\xi)}-1)e^{-\lambda|r\xi|}\frac{dr}{r^{1+\a}}\right\}}\\
&=\exp{\left\{\int_{\mathbb{S}^{d-1}}\hat{\mu}(d\xi)\int_0^\infty\l[\l(e^{(i(m, \xi)-\lambda)r}-1\r)-\l(e^{-\lambda r}-1\r)\r]\frac{dr}{r^{1+\a}}\right\}}\\
&:=\exp\{I_1+I_2\}.
\end{split}
\end{equation*}
Set $\hat{\mu}_1=-\Gamma(-\a)\hat{\mu}$. Then we have
\begin{align}
I_1&=\int_{\mathbb{S}^{d-1}}\hat{\mu}(d\xi)\int_0^\infty\l(e^{(i(m ,\xi)-\lambda)r}-1\r)\frac{dr}{r^{1+\a}},\\
&=-\int_{\mathbb{S}^{d-1}}(\lambda-i(m,\xi))^{\a}\hat{\mu}_1(d\xi)\\
&=-\int_{\mathbb{S}^{d-1}}|\lambda-i(m,\xi)|^{\a}e^{i\a \cdot arg(\lambda-i(m,\xi))}\hat{\mu}_1(d\xi),
\end{align}
and
\begin{equation}
\label{0I2}
\begin{aligned}
I_2&=\int_{\mathbb{S}^{d-1}}\hat{\mu}(d\xi)\int_0^\infty\l(e^{-\lambda r}-1\r)\frac{dr}{r^{1+\a}}
&=\lambda^{\a}\hat{\mu}_1(\mathbb{S}^{d-1}).
\end{aligned}
\end{equation}
Define $\theta=\arg(\lambda-i(m,\xi))$. Then we have
$$e^{i\a \cdot arg(\lambda-im\xi)}=e^{i\a\theta}=\cos(\a\theta)+i\sin(\a\theta).$$
Since  $\hat{\mu}_1$ is a symmetric measure, we conclude that
$$I_1=-\int_{\mathbb{S}^{d-1}}\l((m,\xi)^2+\lambda^2\r)^{\a/2}cos(\a\theta)\hat{\mu}_1(d\xi).$$
Recall that $I_1$ is the L\'{e}vy symbol, $\mathscr{R}I_1\leq0$. There exists $\eta>1,$ such that
\begin{equation}
\label{0I1}
-\eta |m|^{\a}-C\leq I_1\leq -\eta^{-1}|m|^{\a},
\end{equation}
where $C$ is a positive constant.

Combined the inequality \eqref{0I1} with equality \eqref{0I2}, we have
\begin{equation}
\label{i12}
-\eta|m|^{\alpha}+\lambda^{\alpha}\hat{\mu}_1(\mathbb{S}^{d-1})-C\leq I_1+I_2 \leq \lambda^{\alpha}\hat{\mu}_1(\mathbb{S}^{d-1})-\eta^{-1}|m|^{\alpha}
\end{equation}

{ \bf{Step 2.}} We will establish the  upper bound for the heat-kernel and for the derivatives of the heat-kernel. In fact,
for the operator $\mathcal{L}$, it is well-known that the
associated convolution Markov semigroup has a $C^\infty$ density
$p_{\a}(\cdot,t)$. By Fourier inversion,  for all $t>0,$ $y\in \mathbb{R}^d$, we get
$$p_{\a}(y,t)=\frac{1}{(2\pi)^d}\int_{\mathbb{R}^d}\exp(-i(y,m))\exp\left(t(I_1+I_2\right)dm.$$
Moreover, we have the following two results :
\begin{enumerate}
 \item For some $c=c(\gamma,\alpha)>0$ and $t\in(0,1]$, there exists $\a\in (0,1)$ such that for every $\gamma$ satisfying $0<\gamma<\a$, we have
  $\int_{\mathbb{R}^d}|y|^\gamma p_{\a}(y,t)dy\leq ct^{\gamma/{\a}}.$
  \item There exists a positive constant $c=c(\a,\beta)>0$ such that
  $\int_{\mathbb{R}^d}|y|^\beta |D_y^kp_{\a}(y,t)|dy\leq \frac{c}{t^{[k-\beta]/{\a}}},\; t\in (0,1], k=1,2.$
  Here $D_y^1p_{\a}(y,t)$ and $D_y^2p_{\a}(y,t)$
   denote the gradient and Hessian matrix in the
  $y$-variable, respectively.
\end{enumerate}
 On the one hand, by the inequality \eqref{i12}, we  have
\begin{equation*}
  e^{(I_1+I_2)}\leq e^{\lambda^{\a}\hat{\mu}_1(\mathbb{S}^{d-1})}e^{-\eta^{-1}|m|^{\a}}\leq C e^{-\eta^{-1}|m|^{\a}}.
\end{equation*}
Thus
\begin{equation*}
\begin{split}
  \int_{\mathbb{R}^d}|y|^\gamma p_{\a}(y,t)dy&\leq C\int_{\mathbb{R}^d}\int_{\mathbb{R}^d}e^{-i(y,m)}|y|^\gamma e^{-t\eta^{-1}|m|^{\a}}dm dy\\
  &\leq C\int_{\mathbb{R}^d}\l(\int_{\mathbb{R}^d}
 e^{-i(y,m)}|y|^\gamma dy\r) e^{-t\eta^{-1}|m|^{\a}}dm\\
 &\leq C \int_{\mathbb{R}^d}|m|^{-\gamma-1}e^{-t\eta^{-1}|m|^{\a}}dm\\&\leq C \int_{\mathbb{R}^d}e^{-t\eta^{-1}|m|^{\a}}
 (t\eta^{-1}|m|^{\a})^{\frac{-\gamma-\a}{\a}}t^{\frac{\gamma+\a}{\a}}\frac{1}{t}d(-t\eta^{-1}|m|^{\a})\\
 &\leq C t^{\frac{\gamma}{\a}}.
  \end{split}
\end{equation*}
On the other hand,
\begin{equation*}
\begin{split}
\int_{\mathbb{R}^d}|y|^\beta |D_y^kp_{\a}(y,t)|dy&\leq \int_{\mathbb{R}^d}\int_{\mathbb{R}^d}|m|^ke^{-i(y,m)}|y|^\beta e^{-t\eta^{-1}|m|^{\a}}dm dy\\
&\leq C \int_{\mathbb{R}^d}|m|^{k-\gamma-1}e^{-t\eta^{-1}|m|^{\a}}dm\\
&\leq Ct^{\frac{\beta-k}{\a}}\leq \frac{C}{t^{[k-\beta]/{\a}}}.
  \end{split}
\end{equation*}
{ \bf{Step 3.}}
Since $\dive{f}\in C^{\beta}_b$, which will not affect the discuss  as in \cite[Theorem 3]{DR}. We also have
\begin{equation}
\label{esti}
\|p(x,t)\|_{L^\infty\l([0,T],C^{\a+\beta}_b\r)}\leq C\|g\|_{C^{1+\beta}_b}.
\end{equation}
{ \bf{Step 4.}} For the case $ \alpha \in (1,2),$ the condition $\alpha+\beta>1$ is obviously satisfied. Further, the characteristic function is the same as the case of $ \alpha \in (0,1).$ We can obtain the upper bound for the heat-kernel and the derivatives of the
heat-kernel in the same way as $ \alpha \in (0,1)$, i.e., 
\begin{equation}
\label{esti12}
\|p(x,t)\|_{L^\infty\l([0,T],C^{\a+\beta}_b\r)}\leq C\|g\|_{C^{\alpha+\beta}_b}.
\end{equation}
Therefore, for the case $\alpha \in (0,1)\cup(1,2)$, we have 
\begin{equation}
\label{esti01}
\|p(x,t)\|_{L^\infty\l([0,T],C^{\a+\beta}_b\r)}\leq C\|g\|_{C^{(1\vee\alpha)+\beta}_b}.
\end{equation}
Moreover, the global Schauder estimates still holds.
\end{proof}

\subsection*{Appendix ~II}
\setcounter{equation}{0}
\renewcommand\theequation{B\arabic{equation}}
\begin{proof}
   Let $M$ be the maximum value of the initial probability density, then for the numerical solution $0<P_j^n \leq M$, we have $\zeta(\a-1)\leq 0$. Applying the explicit Euler to the semi-discrete scheme \eqref{eq:DiscreteFPE}, we get
    \bess
    \label{depo}
        P_j^{n+1} &=& P_j^n-\delt C_{\a} \zeta(\a-1)h^{-\a} (P_{j+1}^n-2P_j^n+P_{j-1}^n) \\
            &&- C_{\a}\delt  \left[W_1(x_j)+W_2(x_j) \right]P_j^n+  C_{\a}\delt h \sum\limits_{k=-J-j, k\neq 0}^{J-j}\!{''}\frac{P_{j+k}^n-P_j^n}{e^{\lam |x_k|}|x_k|^{1+\a}} \\
            &=&\bigg[1+2\delt C_{\a}\zeta(\a-1) h^{-\a}- C_{\a}(W_1(x_j)+W_2(x_j))\delt \\
            &&-  C_{\a}\delt h \sum\limits_{k=-J-j, k\neq 0}^{J-j}\!{''}\frac{1}{e^{\lam |x_k|}|x_k|^{1+\a}} \bigg] P_j^n
             -\delt C_{\a}\zeta(\a-1)h^{-\a}(P_{j-1}^n+P_{j+1}^{n})   \\
            &&+  C_{\a}\delt h \sum\limits_{k=-J-j, k\neq 0}^{J-j}\!{''}\frac{P_{j+k}^n}{e^{\lam |x_k|}|x_k|^{1+\a}}.
 \eess
Set
\begin{equation}
\label{deo}
\begin{aligned}
    L:&= C_{\a}(W_1(x_j)+W_2(x_j))\delt  + C_{\a}\delt h \sum\limits_{k=-J-j, k\neq 0}^{J-j}\!{''}\frac{1}{e^{\lam |x_k|}|x_k|^{1+\a}}.
    \end{aligned}
\end{equation}
 Then by the inequality (\ref{MPcondition}), we have
 \begin{equation}
\begin{aligned}
    L &\leq   C_{\a} \delt \left[\int_{-\infty}^{-1-x_j} \frac{\dy}{e^{\lam y}|y|^{1+\a} } +  \int_{1-x_j}^{\infty} \frac{\dy}{e^{\lam y}|y|^{1+\a} }
    +\frac{2h}{e^{\lam h}h^{1+\a}}+\int_{(-1-x_j+\frac{h}{2},1-x_j-\frac{h}{2})\backslash (-h,h) } \frac{\dy}{e^{\lam y}|y|^{1+\a} } \right]  \\
    &\leq  C_{\a} \delt \left[\frac{2}{e^{\lam h}h^\a}+2\int_h^\infty \frac{\dy}{e^{\lam y}|y|^{1+\a} }\right]\\
    &\leq \frac{2 C_{\a} \delt }{h^\a}(1+\frac{1}{\a})\\
    &\leq 1+2\delt C_{\a}\zeta(\a-1) h^{-\a},
\end{aligned}
\end{equation}
where $x_j=jh$.

Therefore we have
\begin{equation}
 P_j^{n+1}\leq \left\{1- C_{\a}\delt[W_1(x_j)+W_2(x_j)]\right\}M \leq M.
\end{equation}
\end{proof}
\subsection*{Appendix ~III}
\setcounter{equation}{0}
\renewcommand\theequation{C\arabic{equation}}
\begin{proof}
 Set $e^{n}_{j}=P^{n}_{j}-p(x_j,t_n)$, where $p(x_j,t_n)$ is the analytic solution at the point $(x_j, t_n)$, then we have
\begin{equation}
\begin{aligned}
e^{n+1}_{j}-e^{n}_{j}&=-\Delta tC_{\alpha}\zeta(\alpha-1)h^{2-\alpha}\frac{e^{n}_{j-1}-2e^{n}_{j}+e^{n}_{j+1}}{h^2} \\
 &+\Delta tC_{\alpha}h\sum^{J-j}_{k=-J-j, k\neq 0}\frac{e_{j+k}^n-e_j^n}{e^{\lambda|x_k|}|x_k|^{1+\alpha}}
-\Delta t T^{n}_{j},
\end{aligned}
\end{equation}
where

\begin{equation}
\begin{aligned}
&T^{n}_{j}=C_{\alpha}\zeta(\alpha-1)h^{2-\alpha}\frac{p(x_{j-1},t_n)-2p(x_j,t_n)+p(x_{j+1},t_n)}{h^2}\\
&-C_{\alpha}h\sum^{J-j}_{k=-J-j, k\neq 0}\frac{p(x_{j+k},t_n)-p(x_j,t_n)}{e^{\lambda|x_k|}|x_k|^{1+\alpha}}+\frac{p(x_j,t_{n+1})-p(x_j,t_n)}{\Delta t}.
\end{aligned}
\end{equation}
Further, we have
\begin{equation}
\begin{aligned}
&T^{n}_{j} = \frac{1}{2}\frac{\partial^2p(x_j,t_n)}{\partial^2 t}\Delta t+A_{\alpha}\zeta(\alpha-1)h^{4-\alpha}\frac{\partial^4 p(x_j,t_n)}{\partial^4 x}-B_{\alpha}\zeta(\alpha-3)h^{4-\alpha}\frac{\partial^4 p(x_j,t_n)}{\partial^4 x}\\
&-D_{\alpha}\frac{\partial}{\partial y}\left(\frac{p(x_j+y,t_n)-p(x_j, t_n)}{e^{\lambda |y|}|y|^{1+\alpha}}\right)\Big|^{y=L-x_j}_{y=-L-x_j}\\
&+E_{\alpha}\int_{\{-\infty,-L-x_j\}\bigcup \{L-x_j, \infty\}}\frac{p(x_j+y,t_n)-p(x_j,t_n)}{e^{\lambda |y|}|y|^{1+\alpha}}dy+\cdots,
\end{aligned}
\end{equation}
where $ \zeta(\tau)$ is the Riemann zeta function initially defined for $\mathbb{R}e \tau > 1$ by $\zeta(\tau)=\sum_{k=1}^{\infty}k^{-\tau}$, $A_{\alpha}, B_{\alpha}, D_{\alpha},  E_{\alpha}$ are constants depending on $\alpha$.

Obviously, we have
\begin{equation}
|T^{n}_j|\leq O(\Delta t)+ O(h^2)+O(L^{-\alpha}):=\widetilde{{T}}.
\end{equation}
Therefore, the truncation error is uniformly bounded. By the condition \eqref{MPcondition}, we have
\begin{equation}
\max |e^{n+1}_j|\leq \max |e^{n}_j|+\Delta  t \widetilde{T}\leq n\Delta  t \widetilde{{T}}.
\end{equation}
\end{proof}

\section*{Data Availability Statements}

The data that support the findings of this study are openly available in GitHub, Ref. \cite{Lin}.

\end{document}